\documentclass[11pt,oneside]{amsart}
\usepackage{amsthm}
\usepackage{amsmath}
\usepackage{amssymb}
\usepackage{graphicx}
\usepackage{color}



\setlength{\voffset}{-0.5in}
 \setlength{\hoffset}{-0.5in}
\setlength{\textheight}{9in}
 \setlength{\textwidth}{6.5in}

\usepackage{url}
\makeatletter\def\url@leostyle{%
  \@ifundefined{selectfont}{\def\UrlFont{\sf}}{\def\UrlFont{\scriptsize\ttfamily}}} \makeatother\urlstyle{leo}

\pagestyle{plain}

\date{}

\newtheorem{proposition}{Proposition}
\newtheorem{theorem}{Theorem}

\theoremstyle{definition}
\newtheorem{definition}[theorem]{Definition}
\newtheorem{remark}[theorem]{Remark}

\bibliographystyle{amsplain}

\title{Parameter estimations for SPDEs with multiplicative fractional noise }
\author{Igor Cialenco}
\address{
Department of Applied Mathematics,
Illinois Institute of Technology \\
10 West 32nd Str, Bld E1, Room 208, \\
Chicago, IL 60616, USA }
\email{igor@math.iit.edu, http://math.iit.edu/$\sim$igor}
\thanks{Research supported in part by the the NSF grant DMS-0908099.}

\date{\today}

\begin{document}

\begin{abstract}

We study parameter estimation problem for diagonalizable  parabolic stochastic partial differential equations
driven by a multiplicative fractional noise with any Hurst parameter $H\in(0,1)$.
Two classes of estimates are investigated: traditional maximum likelihood type estimates,
and a new class called closed-form exact estimates. Finally several examples are discussed, including
statistical inference for stochastic heat equation driven by a fractional Brownian motion.

\vskip 0.3 true cm
\noindent {\bf  AMS 2010:} Primary 60H15; Secondary 62F12, 60G22  \vskip 0.2 true cm
\noindent {\bf  Keywords:} Asymptotic normality, parameter estimation, stochastic PDE, multiplicative noise, singular models
\end{abstract}
\maketitle

\section{Introduction}
Parameter estimation problem for stochastic partial differential equation
has been of great interest  in the past decade, and besides being a challenging theoretical problem,
it finds its roots and motivations from various applied problems:
fluid dynamics \cite{Frankignoul1985, PiterbargRozovskii1996},  biology \cite{Dawson1980, De1987},
finance \cite{AiharaBagchi2005, AiharaBagchi2006, Cont2005}, meteorology \cite{DuanChen2010} etc.
At general level the problem is to find or estimate the model parameter $\vartheta$ (could be a vector)
based on observations of the  underlying process $u_\vartheta$ which is assumed to be a solution of
a stochastic evolution equation in finite or infinite dimensional space.  We will follow traditional continuous time
approach and assume that the solution $u_\vartheta(t)$ is observed continuously in time $t\in[0,T]$.
From statistical point of view, we suppose  that there exists a family of probability measures $\mathbf{P}_\vartheta$ that depends on
parameter $\vartheta\in\Theta\subset\mathbb{R}^n$, and each $\mathbf{P}_\vartheta$ is the distribution of  a random element.
Assuming that a realization of one random element corresponds to a particular value $\vartheta_0$, the goal is to estimate
this parameter from given observations. One approach is to select parameter $\vartheta$ that {\it most likely} produces the observations.
This method assumes that the problem is regular or absolutely continuous, which means that
there exists a reference probability measure $\mathbf{Q}$ such that all measures $\mathbf{P}_\vartheta, \ \vartheta\in\Theta$, are
absolutely continuous with respect to $\mathbf{Q}$. Then Radon-Nikodym derivative $d\mathbf{P}_\vartheta/d\mathbf{Q}$, also called the likelihood ratio, exists, and
{\it the Maximum Likelihood Estimator} (MLE) $\widehat{\vartheta}$ of the parameter of interest is computed by maximizing the likelihood ratio with respect to $\vartheta$.
Usually $\widehat{\vartheta}\neq \vartheta$ and the problem is to study the convergence of MLE to the true parameter as more information arrives (for example
as time passes or by decreasing the amplitude of noise).
If the measures $\mathbf{P}_\vartheta$ are singular for different parameters $\vartheta$, then the model
is called singular, and usually the parameter can be found exactly, at least theoretically.
While all regular models are to some extend  the same, each singular model requires individual approach. For example, estimating the drift
coefficient for finite-dimensional stochastic differential equations is typically a regular problem, and the parameter can be estimated by means of MLEs,
while estimating the diffusion (volatility) coefficient is a singular problem and one can find the diffusion coefficient
exactly through quadratic variation of the underlying process. For some finite-dimensional systems,
estimating the ``drift coefficient''  is also a singular problem, and as shown in Khasminskii et al
\cite{KhasminskiiKrylovMoshniuk} the estimators have nothing to do with MLEs.
Generally speaking statistical inference for finite-dimensional diffusions has been studied widely,
and there are established necessary and sufficient conditions for absolute continuity of corresponding
measures (see, for example \cite{LiptserShiryayev}, \cite{KutoyantsBook2004} and references therein). Some of these results have been extended to infinite dimensional systems in particular to
parabolic Stochastic Partial Differential Equations (SPDE). It turns out that in many cases the estimation of drift coefficient for SPDEs is a singular problem,
and as general theory suggests one can find the parameter ``exactly''.  One of the first fundamental result in this area that
explorers this singularity is due to Huebner, Rozovskii, and Khasminskii \cite{HubnerRozovskiiKhasminskii}.  The idea is to approximate
the original singular problem by a sequence of regular problems for which MLEs exist.
The approximation was done by considering Galerkin-type of projections of the solution on a finite-dimensional space where the estimation
problem becomes regular, and it was proved that as dimension of the projection increases the corresponding MLE will converge to the true parameter.
In \cite{HuebnerLototskyRozovskii97, HuebnerRozovskii,  LototskyRozovskii1999, LototskyRozovskii2000}, the problem was extended to a
general parabolic SPDE driven by additive noise and the convergence of the estimators was given in terms of the
order of the corresponding differential operators.  For recent developments and  other types of inference
problems in SPDEs see a survey paper by Lototsky \cite{Lototsky2009Survey}
and references therein. Statistical inference for SPDEs driven by multiplicative noise is a more challenging problem.
First and only attempt to study equations with multiplicative noise is given in \cite{IgorSergey2007},
by considering Wiener (not fractional) type noise  without spatial correlation structure. Besides MLE type estimates,
a  completely new class of {\it exact estimates}  were found due essentially to the very singular nature of the problem.

The aim of this note is to study parameter estimation problem for stochastic parabolic equations driven
by a {\it multiplicative fractional noise} with following dynamics
\begin{equation}\label{eq:mainIntro}
u(t) = u(0) + \int_0^t(\mathcal{A}_0+\theta\mathcal{A}_1)u(s)ds + \int_0^t\mathcal{M}u(s)dW^H(s),
\end{equation}
where $\mathcal{A}_0, \mathcal{A}_1$ and $\mathcal{M}$ are some known linear operators,
$W^H$ is a fractional Brownian motion with a Hurst parameter $H\in(0,1)$, and $\theta$ is a real
parameter belonging to a bounded set $\Theta\subset\mathbb{R}$.
For now, assume that the stochastic integral with respect to
fractional Brownian Motion $W^H$ is well-defined, while the exact meaning will
be specified in Section \ref{subsection:Existance}.
The main goal is to estimate the parameter $\theta$ based on the observations of the underlying process $u(t), \ t\in[0,T]$.
Similar problem for SPDEs  driven by additive space-time fractional
noise was investigated in \cite{IgorSergeyJan2008, PrakasaRao2004, MaslowskiPospisil2008}.
Estimation of drift coefficient for finite-dimensional fractional Ornstein-Uhlenbeck and similar
processes has been investigated by Tudor and Viens \cite{ViensTudor2007} for $H\in(0,1)$,
Kleptsyna and Le Breton \cite{KleptsynaBreton2002} for $H\in[1/2,1)$, by developing Girsanov type theorems and finding
MLEs. Berzin and Leon \cite{BerzinLeon2008} estimate simultaneously both drift and diffusion coefficients.
Least square estimates for drift coefficients were established by Hu and Nualart \cite{NualartHu2009}, and MLE type estimates for
discretely observed process by Hu, Weilin and Weiguo \cite{HuWeilinWeiguo2009}. For a general theory, including Girsanov Theorem and some results on statistical inference,
for finite dimensional diffusions driven by fractional noise see also the monograph by Mishura \cite{MishuraBook2008}.

In this paper we continue to explore the impact of the noise in  infinite-dimensional evolution equations
and its implications on statistical inference. Besides its theoretical roots, this problem is also motivated by
increasing demand in modeling various phenomena by SPDEs driven by fractional noise \cite{ DuanChen2010, Duan2009}.
We assume that the solution  of \eqref{eq:mainIntro}  is observed at every
$t\in[0,T]$, and hence each Fourier coefficient $u_k(t)=(u_k(t), h_k)_H$ is observable for every $t\in[0,T]$, where
$H$ is a Hilbert space in which the solution leaves and $h_k, k\geq 1,$ is a CONS in $H$. All results are stated in terms of Fourier coefficients
$u_k$.
 In the first part of Section \ref{section:PreliminaryResults} we
set up the problem and establish the existence and uniqueness of the solution of the corresponding SPDE.
In Subsection \ref{subsection:estimationGeometricalFBM} we introduce the main notations and
find the MLE for fractional Geometrical Brownian Motion
(which is not covered explicitly in any other sources, at our best knowledge). In Section \ref{section:MLEforSPDE}
we study the estimates of drift coefficient $\theta$ of equation \eqref{eq:mainIntro} based on MLE of the corresponding Fourier coefficients.
We establish sufficient conditions on operators $\mathcal{A}_1, \ \mathcal{A}_1$ and $\mathcal{M}$, that guarantee efficiency and
asymptotic normality of the estimates and some of their versions.  Section \ref{section:ExactEstimates} is
dedicated to investigation of  a new type of estimates called {\it closed-form exact} estimates, similar to those studied in \cite{IgorSergey2007}.
We show that $\theta$ can be found {\tt exactly} by knowing just several (usually two) Fourier coefficients.
Moreover, by the same technics we found an exact estimate of the Hurst parameter $H$ too, in both regimes, $\theta$ known and unknown.
Of course there are many other methods of finding the Hurst parameter, but it is out of scope of this publication to apply them to our equation.
Some of the results follow from simple algebraic evaluations, but the very existence of such estimates is amazing and gives a better understanding of
the nature of the problem's singularity.
Also, we want to mention that, despite of memory property of the fractional Brownian Motion which is spilled over the solution too, the exact estimates
are based only on observations at time zero and some future time $T$. In contrast, the MLEs require observation of the whole trajectory $u(t), \ t\in[0,T]$.
We conclude the paper with two examples which are of interest along: stochastic heat equation with parameter $\theta$ next to Laplace operator,
and a general second order parabolic SPDE with $\theta$ next to a lower order operator.

While we assume  that data is sampled continuously in time, in practice usually this is not the case.
For the MLEs derived in Section \ref{section:MLEforSPDE} the problem is reduced to approximate
some integrals of a deterministic function with respect
to the solution $u$ and eventually to the fractional Brownian motion.
However, the Exact Estimates from Section \ref{section:ExactEstimates} depend only on the values of the solution  at initial
time $t=0$ and some future time $t=T$, and thus do not depend on how the solution is observed in time.

\section{Preliminary results}\label{section:PreliminaryResults}
\subsection{The equation and existence of the solution}\label{subsection:Existance}
Let $\mathbf{H}$ be a separable Hilbert space with the inner product $(\cdot,
\cdot)_0$ and the corresponding norm $\|\cdot\|_0$.
Let $\Lambda$ be  a
densely-defined
linear operator on $\mathbf{H}$ with the following property:
there exists a positive number $c$ such that
 $\| \Lambda u\|_0\geq c\|u\|_0$ for
every $u$ from the domain of $\Lambda$.
Then the operator powers  $\Lambda^\gamma, \ \gamma\in\mathbb R,$ are
 well defined and generate the spaces $\mathbf{H}^\gamma$:
 for $\gamma>0$, $\mathbf{H}^\gamma$ is the domain of
  $\Lambda^\gamma$; $\mathbf{H}^0=\mathbf{H}$;
 for $\gamma <0$,  $\mathbf{H}^\gamma$ is the completion
 of $\mathbf{H}$ with respect to the norm
 $\| \cdot \|_\gamma := \|\Lambda \cdot\|_0$ (see for instance
 Krein at al. \cite{KreinPetuninSemenov}).
 By construction, the collection of spaces
 $\{ \mathbf{H}^\gamma,\ \gamma\in \mathbb R\}$
  has the following properties:
 \begin{itemize}
 \item $\Lambda^{\gamma}(\mathbf{H}^r) = \mathbf{H}^{r-\gamma}$ for every
$\gamma,r\in\mathbb{R}$;
 \item For $\gamma_1<\gamma_2$ the space $\mathbf{H}^{\gamma_2}$ is densely and
 continuously embedded into $\mathbf{H}^{\gamma_1}$: $\mathbf{H}^{\gamma_2}
 \subset \mathbf{H}^{\gamma_1}$ and there exists a positive
 number $c_{12}$ such that  $\|u\|_{\gamma_1}\leq c_{12} \|u\|_{\gamma_2}$
 for all $u\in \mathbf{H}^{\gamma_2}$ ;
\item  for every
$\gamma\in\mathbb R$ and $m>0$, the space $\mathbf{H}^{\gamma -m}$ is the
dual of $\mathbf{H}^{\gamma+m}$ relative to the inner product in
$\mathbf{H}^{\gamma}$, with duality $\langle\cdot,\cdot\rangle_{\gamma,m}$ given by
$$
\langle u_1, u_2 \rangle_{\gamma,m} = (\Lambda^{\gamma -m}u_1,
\Lambda^{\gamma+m}u_2)_0, \ {\rm where\ }
u_1\in\mathbf{H}^{\gamma-m},\ u_2\in\mathbf{H}^{\gamma +m}.
$$
\end{itemize}

Let $(\Omega, \mathcal{F}, \{\mathcal{F}_t\}, \mathbb{P})$ be a stochastic basis
with usual assumptions.

\begin{definition} \label{def-cfBM}
A fractional Brownian motion with a Hurst parameter
$H\in (0,1)$ is a Gaussian process $W^H$ with zero mean and
covariance
$$
\mathbb{E} W^H(t)W^H(s)=\frac{1}{2}(t^{2H}+s^{2H}-|t-s|^{2H}), \quad t,s\geq 0.
$$
\end{definition}

Consider the
following evolution equation
\begin{equation}\label{eq:main}
\begin{cases}
du(t) = [(\mathcal{A}_0 + \theta\mathcal{A}_1)u(t) + f(t)]dt + (\mathcal{M}u(t) + g(t))dW^H(t) , \ \ 0<t<T, \\
u(0) = u_0\,  ,
\end{cases}
\end{equation}
where $\mathcal{A}_0, \mathcal{A}_1, \mathcal{M}$ are linear operators in $\mathbf{H}$, $f$ and $g_k$ are
adapted $\mathbf{H}$-valued processes, $u_0\in\mathbf{H}$, $W^H$ is a fractional Brownian Motion with Hurst parameter
$H\in(0,1)$, and $\theta$ is a scalar parameter bellowing to an open set $\Theta\subset \mathbb{R}$.

\begin{definition}\label{def-diag}
 Equation \eqref{eq:main} is called diagonalizable if
the operators $\mathcal{A}_0, \ \mathcal{A}_1$  and $\mathcal{M}$ have point
spectrum and a common system of eigenfunctions
$\{h_j,\ j\geq 1\}.$
\end{definition}

Denote by $\rho_k$, $\nu_k$, and $\mu_k$ the
eigenvalues of the operators $\mathcal{A}_0$, $\mathcal{A}_1$,
and $\mathcal{M}$:
\begin{equation}
\mathcal{A}_0h_k=\rho_kh_k,\ \ \mathcal{A}_1h_k=\nu_kh_k,\ \
\mathcal{M}h_k=\mu_kh_k,  \ k\geq 1,
\end{equation}
and also denote by $\alpha_k(\theta):= \rho_k+\theta\nu_k, k\geq 1$,  the eigenvalues of
operator $\mathcal{A}_0+\theta\mathcal{A}_1$.
Without loss of generality we assume that the operator $\Lambda$ has the same eigenfunctions
as operators $\mathcal{A}_0, \  \mathcal{A}_1, \ \mathcal{M}$: $\Lambda h_k = \lambda_kh_k, \ k\geq 1$.
\begin{definition}
The equation \eqref{eq:main} is called {\it parabolic} in the triple
$(\mathbf{H}^{\gamma+m}, \mathbf{H}^\gamma, \mathbf{H}^{\gamma-m})$,
for some positive $m$ and real $\gamma$, if there exists
positive real numbers $\delta, \ C_1$ and a real number $C_2$ such that, for all $k\geq 1$ and
all $\theta\in\Theta$,
\begin{align}
& \lambda_k^{-2m}|\rho_k + \theta\nu_k|\leq C_1; \label{eq:parabolic1} \\
& 2(\rho_k+\theta\nu_k) + \mu_k^2 + \delta\lambda_k^{2m} \leq C_2. \label{eq:parabolic2}
\end{align}
\end{definition}
This definition is equivalent to the classical definition of parabolic equations,
but written in terms of eigenvalues of corresponding operators.
\begin{theorem}\label{th:existance}
Assume that equation \eqref{eq:main} is diagonalizable and parabolic in the triple
$(\mathbf{H}^{\gamma+m}, \mathbf{H}^\gamma, \mathbf{H}^{\gamma-m})$, the initial conditions
$u_0$ is deterministic and belongs to $\mathbf{H}^\gamma$, the process
$f=f(t)$ is $\mathcal{F}_t$-adapted with values in $\mathbf{H}^{\gamma-m}$, and
$\mathbb{E}\int_0^T\|f(t)\|_{\gamma-m}^2dt < \infty$, the process
$g = g(t)$ is $\mathcal{F}_t$-adapted with values in $\mathbf{H}^{\gamma}$ and
$\mathbb{E}\int_0^T\|g(t)\|^2_\gamma \,dt< \infty$.
Then the process $u$ defined by
\begin{equation}\label{eq:solution}
u(t)=\sum\limits_{k\geq 1} u_k(t)h_k,
\end{equation}
where
\begin{align}
& u_k(t)=u_k(0)\exp\left([\alpha_k(\theta)+f_k(t)]t -
\frac{1}{2}[\mu_k+g_k(t)]^2t^{2H} + [\mu_k+g_k(t)]W^H(t)\right), \label{eq:solution2}\\
& f(t)=\sum\limits_{k\geq 1}f_k(t)h_k, \ g(t)= \sum\limits_{k\geq 1} g_k(t)h_k, \nonumber
\end{align}
is an $\mathbf{H}^\gamma$-valued stochastic process.
\end{theorem}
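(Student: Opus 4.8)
\smallskip
\noindent\emph{Proof sketch.} The plan is to reduce the infinite-dimensional problem to a family of scalar equations through the common eigenbasis, and then to prove that the explicit series \eqref{eq:solution}--\eqref{eq:solution2} converges in $\mathbf{H}^\gamma$, the crux being a quantitative estimate that plays the exponential growth produced by the fractional noise against the spectral decay built into the parabolicity conditions \eqref{eq:parabolic1}--\eqref{eq:parabolic2}. Since \eqref{eq:main} is diagonalizable, projecting onto $h_k$ turns it into the scalar linear equation $du_k=[\alpha_k(\theta)u_k+f_k]\,dt+[\mu_k u_k+g_k]\,dW^H$ with $u_k(0)=(u_0,h_k)_0$, and one recognizes \eqref{eq:solution2} as the representation of its solution (for $H=1/2$ by the It\^o formula, for general $H$ by the fractional Wick--It\^o calculus, exactly as in the computation of fractional geometric Brownian motion in Subsection~\ref{subsection:estimationGeometricalFBM}). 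It then suffices to show that $u(t)=\sum_k u_k(t)h_k$ lies in $\mathbf{H}^\gamma$ and that $u$ is adapted and measurable.

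The heart of the matter is the bound on $\|u(t)\|_\gamma^{2}=\sum_{k\ge1}\lambda_k^{2\gamma}|u_k(t)|^{2}$, which I would establish for each fixed $t\in(0,T]$ (the case $t=0$ being immediate since this sum is then $\|u_0\|_\gamma^{2}<\infty$). The key algebraic observation is that, writing $A_k=\mu_k+g_k(t)$, completing the square in $W^H(t)$ gives
\[
-A_k^{2}t^{2H}+2A_kW^H(t)=-\bigl(A_k t^{H}-t^{-H}W^H(t)\bigr)^{2}+\frac{(W^H(t))^{2}}{t^{2H}}\le\frac{(W^H(t))^{2}}{t^{2H}},
\]
so that from \eqref{eq:solution2},
\[
\lambda_k^{2\gamma}|u_k(t)|^{2}\le\exp\!\Bigl(\tfrac{(W^H(t))^{2}}{t^{2H}}\Bigr)\,\lambda_k^{2\gamma}|u_k(0)|^{2}\,e^{2\alpha_k(\theta)t}\,e^{2f_k(t)t}.
\]
Now \eqref{eq:parabolic2} yields $2\alpha_k(\theta)\le C_2-\mu_k^{2}-\delta\lambda_k^{2m}\le C_2-\delta\lambda_k^{2m}$, hence $e^{2\alpha_k(\theta)t}\le e^{C_2T}e^{-\delta t\lambda_k^{2m}}$, which for $t>0$ decays faster than any power of $\lambda_k$, while \eqref{eq:parabolic1} gives $|\mu_k|=O(\lambda_k^{m})$. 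The forcing factor $e^{2f_k(t)t}$ is then absorbed into this spectral gap by Young's inequality together with the Sobolev regularity $\sum_k\lambda_k^{2(\gamma-m)}|f_k(t)|^{2}=\|f(t)\|_{\gamma-m}^{2}<\infty$ (valid for a.e.\ $t$), which forces $|f_k(t)|$ to grow at most polynomially in $\lambda_k$; summing over $k$ and using $u_0\in\mathbf{H}^\gamma$ then gives $\|u(t)\|_\gamma<\infty$ almost surely. If instead one wants a moment bound, one takes expectations and uses $W^H(t)\sim\mathcal N(0,t^{2H})$, so $\mathbb E\,e^{2A_kW^H(t)}=e^{2A_k^{2}t^{2H}}$, and \eqref{eq:parabolic2} is precisely calibrated so that $2\alpha_k(\theta)+\mu_k^{2}\le C_2-\delta\lambda_k^{2m}$ still dominates the $\mu_k^{2}$ reintroduced by this Gaussian moment (conditioning on $\mathcal F_0$ and Cauchy--Schwarz handling the adapted $f$ and $g$). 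Adaptedness and joint measurability are routine: each $u_k(\cdot)$ is $\mathcal F_t$-adapted with continuous paths, the partial sums $\sum_{k\le N}u_k(t)h_k$ are $\mathcal F_t$-measurable $\mathbf{H}^\gamma$-valued and converge to $u(t)$, and a limit of jointly measurable maps is jointly measurable.

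The main obstacle is exactly this convergence estimate. Unlike the additive-noise or classical Wiener settings, $W^H$ for general $H$ carries no martingale structure and admits no It\^o isometry, so one cannot fall back on the usual stochastic-calculus bounds; the argument instead leans on the pathwise square-completion above---which conveniently removes both the quadratic growth in $\mu_k$ and the dependence on $g_k$ from the diffusion exponent---together with a careful use of \eqref{eq:parabolic2}, whose sole purpose is to guarantee that the spectral decay $\delta\lambda_k^{2m}$ outweighs the $\mu_k^{2}t^{2H}$ contribution coming from the fractional exponential (and from its second moment). A secondary difficulty is that $f$ and $g$ are merely adapted, so expectations cannot be taken naively and one must argue pathwise via Sobolev regularity or by conditioning on $\mathcal F_0$. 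Finally, because of the multiplicative (geometric) nature of the solution, for $H<1/2$ and $t$ small the second moments may in fact be infinite, so the natural reading of the conclusion is that $u(t)\in\mathbf{H}^\gamma$ almost surely for every $t\in[0,T]$, with $u$ adapted and jointly measurable.
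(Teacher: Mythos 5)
Your proposal is essentially correct, and it actually contains the paper's argument as your secondary ``moment bound'' option: the paper's entire proof is the three-line computation $\mathbb{E}|u_k(t)|^2=u_k^2(0)\exp\bigl(2(\alpha_k(\theta)+f_k(t))t+(\mu_k+g_k(t))^2t^{2H}\bigr)$ via the Gaussian moment identity $\mathbb{E}e^{2A_kW^H(t)}=e^{2A_k^2t^{2H}}$, followed by summing $\lambda_k^{2\gamma}\mathbb{E}|u_k(t)|^2$ and invoking \eqref{eq:parabolic2} to conclude that the series converges. Your primary route---the pathwise completion of the square, which bounds the entire stochastic exponent by the single $k$-independent a.s.-finite factor $\exp\bigl((W^H(t))^2/t^{2H}\bigr)$ and then runs a purely deterministic series comparison using $2\alpha_k(\theta)\le C_2-\delta\lambda_k^{2m}$---is genuinely different and buys two things the paper's proof does not: it yields an almost-sure statement without integrating over the adapted processes $f$ and $g$ (the paper silently treats $f_k(t)$ and $g_k(t)$ as deterministic when taking the expectation, and then absorbs them into an unexplained constant $C$), and it sidesteps the delicate interplay between the exponents $t$ and $t^{2H}$ that the moment computation produces. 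The price is that your pathwise bound discards the cancellation $-\tfrac12 A_k^2t^{2H}$ entirely rather than using it against $\mu_k^2$, and your absorption of $e^{2f_k(t)t}$ is the one step that needs more care than you give it: from $\|f(t)\|_{\gamma-m}<\infty$ one only gets $|f_k(t)|=o(\lambda_k^{m-\gamma})$, so the factor $e^{2f_k(t)t}$ is controlled by $e^{-\delta t\lambda_k^{2m}}$ only after a Young-type splitting and, for $\gamma<0$, an additional boundedness hypothesis; this is a real loose end, but it is one the paper's own proof shares rather than resolves. Your closing remark that for $H\ne 1/2$ the natural conclusion is an a.s. (rather than mean-square) statement is a fair and substantive caveat about the theorem itself, not a defect of your argument.
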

\begin{proof}
Since $W^H(t)$ is a Gaussian random variable with zero mean and variance $t^{2H}$, we have
$$
\mathbb{E}|u_k(t)|^2 = u_k^2(0) \exp\left( 2(\alpha_k(\theta)+f_k(t)) t + (\mu_k+g_k(t))^2t^{2H}  \right).
$$
Hence,
$$
\mathbb{E}\|u(t)\|_\gamma^2 = \sum\limits_{k\geq 1} \lambda_k^{2\gamma} |u_k(t)|^2 \leq
C \sum\limits_{k\geq 1} \exp\left( 2\alpha_k(\theta)t + \mu_k^2t^{2H} \right).
$$
By parabolicity condition \eqref{eq:parabolic2}, the last series converges uniformly in $t$, and the
theorem follows.
\end{proof}

The functions $u_k$ formally represent the Fourier coefficients of the solution of equation
\eqref{eq:main} with respect to the basis $\{h_k\}_{k\geq 1}$ and the uniqueness of $u$ follows.
Since the equation is diagonalizable, naturally we conclude that formally $u_k$ has the following dynamics
\begin{equation}\label{eq:FourierCoeff}
du_k(t) = (\theta \nu_k +\rho_k)u_k(t)dt + \mu_k u_k(t)dW^H(t), \quad k\geq 1, t\geq 0.
\end{equation}
Specifying the stochastic integration in \eqref{eq:main} is equivalent to specifying in what sense we understand the integration
with respect to fractional Brownian Motion for the Fourier coefficients \eqref{eq:FourierCoeff}.
Consequently, since the equation has constant coefficients,
specifying the solution of \eqref{eq:FourierCoeff} is the same as to stipulate the sense of stochastic integration in \eqref{eq:FourierCoeff}.
If the integration is understood in Wick sense then $u_k, \ k\geq 1,$ defined in \eqref{eq:solution2} is the unique solution of
equation \eqref{eq:FourierCoeff} for all $H\in(0,1)$ (see for instance \cite{BiaginiHuOksendalBookfBM2008}, Theorem 6.3.1).
All results stated here are easily transferable to any other form of integration, by caring out
the relationship between different form of integration
and consequently adjusting the form of the solution of equation \eqref{eq:FourierCoeff} (for comparison of various form of integration with respect to
fBM see \cite{BiaginiHuOksendalBookfBM2008}, Chapter 6).
Our choice was just to have a unified theory and same formulas for all $H\in(0,1)$.

\begin{definition}\label{def:solution}
The process $u$ constructed in Theorem \ref{th:existance} is called the solution of equation
\eqref{eq:main}.
\end{definition}

It should be mentioned that the above result, with some obvious adjustments,
 also holds true for diagonalizable equations driven
by several independent fractional Brownian Motions, even with different Hurst parameters.

\subsection{Parameter estimation for geometrical fractional Brownian motion}\label{subsection:estimationGeometricalFBM}
In this section we will present some auxiliary results about parameter estimation for one dimensional
diffusion processes driven by multiplicative fractional noise. For similar results for equations with additive noise
see for instance Kleptsyna and Le Breton \cite{KleptsynaBreton2002}, Tudor and Viens \cite{ViensTudor2007}, or
Mishura \cite{MishuraBook2008}, Chapter~6. The results essentially follow from Girsanov type theorem for
diffusions driven by fractional Brownian motion.

Let $\Gamma$ and $\mathrm{B}$ denote the Euler Gamma-functions.
Following Mishura \cite{MishuraBook2008} we introduce the following notations
\begin{align}
C_H & = \left( \frac{\Gamma(3-2H)}{2H\Gamma(\frac32 - H)^3 \Gamma(\frac12 + H)} \right)^\frac12 , \nonumber \\
l_H(t,s) & = C_H s^{\frac12 - H} (t-s)^{\frac12 - H}\mathbb{I}_{0<s<t} , \label{eq:notations} \\
M_t^H &:= \int_0^t l_H(t,s) dW_s^H,  \, , \nonumber
\end{align}
where $H\in(0,1)$, and the integration with respect to fractional Brownian Motion is understood in Wiener sense (for more details see \cite{MishuraBook2008}, Chapter 1).
The process $M_t^H$ is a martingale, also called the fundamental martingale associated with fractional
Brownian motion $W^H_t$ (see for instance \cite{NorrosValkeilaVirtamo1999} or \cite{MishuraBook2008}, Theorem 1.8.1). $M_t^H$ has quadratic characteristic
$\langle M^H\rangle_t = 
t^{2-2H}$, and by L\'evy theorem, there exists a Wiener process $\{B_t, t\geq 0 \}$ on the same probability space
such that
$$
M_t^H = (2 - 2H)^\frac12 \int_0^t s^{\frac12 -H}dB_s  .
$$
Moreover, $\sigma(W_s^H, 0\leq s\leq t) = \sigma(B_s, 0\leq s\leq t)$.

Let us consider the stochastic process of the form
$$
X_t = X_0\exp\left(\theta t -\frac{1}{2}\sigma^2 t^{2H} + \sigma W^H(t)\right), \quad t\geq 1\, ,
$$
which can be called the Geometric Fractional Brownian Motion, and as mentioned in the previous subsection
it is the unique solution of the stochastic equation
$$
dX_t = \theta X_tdt + \sigma X_tdW^H_t\, , \ X_0=x_0, \ t\in[0,T].
$$

Let $Y_t := \ln X_t/X_0 = \theta t - \frac{\sigma^2 t^{2H}}{2} + \sigma W_t^H$, and
consider the process $\widetilde{Y}_t : = \int_0^t l_H(t,s)dY_s$. Note that observing one path of the process
$\{ Y_s,\ 0\leq s\leq t\}$ implies that the one path of process $\{ \widetilde{Y}_s, \ 0\leq s\leq t\}$ is observable too.
By \eqref{eq:notations} we have
\begin{equation}
\widetilde{Y}_t  = \sigma M_t^H + \theta b_1 t^{2-2H} - \sigma^2 Hb_2 t,  \quad t>0\, ,
\end{equation}
where $b_1= C_H \mathrm{B}(3/2 - H, 3/2-H), \
b_2 = C_H\mathrm{B}(1/2+H, 3/2 - H)$.

For a fixed parameter $\theta\in\Theta$, let us denote by $\mathbb{P}_\theta$ the distribution of the process $\widetilde{Y}_t$
and by $\mathbb{P}_0$ the distribution of the process
$\widetilde{Y}_t^0 := \sigma M_t^H = \sigma b_0 \int_0^ts^{1/2-2H}dB_s$.
The measure $\mathbb{P}_\theta$ is absolutely continuous with respect to $\mathbb{P}_0$ and the Radon-Nikodym
derivative, or the likelihood ratio,  has the following form
(see for instance \cite{LiptserShiryayev}, Theorem 7.19 or apply classical Girsanov Theorem for martingales)
\begin{align*}
\frac{d\mathbb{P}_\theta}{d\mathbb{P}_0} (\widetilde{Y}_t)= &
\exp\left(  - \int_0^t\frac{\theta(2-2H)b_1s^{1-2H} - \sigma^2Hb_2}{\sigma^2b_0^2 s^{1-2H}}\,d\widetilde{Y}_s +
\frac12\int_0^t \frac{[\theta(2-2H)b_1s^{1-2H} - \sigma^2Hb_2]^2}{\sigma^2b_0^2s^{1-2H}}\, ds   \right)\, .
\end{align*}
The MLE is obtained by maximizing the log-likelihood ratio with respect to $\theta$.
Since
$$
\frac{\partial}{\partial \theta} \ln \frac{d\mathbb{P}_\theta }{d\mathbb{P}_0} (\widetilde{Y}_t)=
-\frac{(2-2H)b_1}{\sigma^2 b_0}\widetilde{Y}_t + \theta \frac{(2-2H)b_1^2 t^{2-2H}}{\sigma^2 b_0^2} -
\frac{(2-2H)b_1 H b_2 t}{b_0^2} \, ,
$$
the MLE for parameter $\theta$ has the form
\begin{equation}
\widehat{\theta}_t = \frac{\widetilde{Y}_t}{b_1 t^{2-2H}} + \frac{\sigma^2 H b_2}{b_1t^{1-2H}}.
\end{equation}
\begin{proposition}\label{th:MLEforGeometricalfBM}
The estimate $\widehat{\theta}_t,\ t>0$, is an unbiased estimate for parameter $\theta_0$;
$\lim\limits_{t\to\infty}\widehat{\theta}_t = \theta_0$ with probability one, i.e.
$\widehat{\theta}_t$ is a strong consistent estimate of $\theta_0$;
$t^{1-H}(\widehat{\theta}_t-\theta_0)$ converges in distribution to a Gaussian random variable with zero mean
and variance $\sigma^2/b_1^2$.
\end{proposition}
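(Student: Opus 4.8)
The plan is to reduce all three assertions to the behaviour of the fundamental martingale $M^H$. First I would substitute the representation $\widetilde{Y}_t = \sigma M_t^H + \theta_0 b_1 t^{2-2H} - \sigma^2 H b_2 t$ (with $\theta_0$ denoting the true value of the parameter) into the explicit expression for $\widehat{\theta}_t$. The two purely deterministic terms cancel exactly, and one is left with the clean identity
$$
\widehat{\theta}_t = \theta_0 + \frac{\sigma M_t^H}{b_1\, t^{2-2H}}, \qquad t>0 .
$$
From here every claim becomes a statement about the rescaled martingale $M_t^H / t^{2-2H}$.

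For unbiasedness I would simply take expectations in the identity above: since $M^H$ is a martingale with $M_0^H=0$ we have $\mathbb{E}\, M_t^H = 0$, hence $\mathbb{E}\,\widehat{\theta}_t = \theta_0$ for every $t>0$. For strong consistency I would invoke the strong law of large numbers for continuous (local) martingales: because the quadratic characteristic $\langle M^H\rangle_t = t^{2-2H}$ tends to $\infty$ as $t\to\infty$ (this is where $H<1$ is used), one gets $M_t^H/\langle M^H\rangle_t \to 0$ almost surely, i.e. $M_t^H/t^{2-2H}\to 0$ a.s., and therefore $\widehat{\theta}_t \to \theta_0$ a.s. Since $M^H$ is in fact Gaussian with an explicitly known variance, this step can alternatively be carried out by hand, via a Borel--Cantelli argument along the integers together with a maximal inequality controlling the oscillation of $M^H$ on each interval $[n,n+1]$.

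For asymptotic normality I would use that $M_t^H = (2-2H)^{1/2}\int_0^t s^{1/2-H}\,dB_s$ is a Wiener integral of a deterministic kernel, so $M_t^H$ is a centered Gaussian variable with variance $\int_0^t (2-2H)s^{1-2H}\,ds = t^{2-2H}$. Consequently
$$
t^{1-H}\bigl(\widehat{\theta}_t - \theta_0\bigr) = \frac{\sigma}{b_1}\cdot\frac{M_t^H}{t^{1-H}}
$$
is, for every $t>0$, exactly a centered Gaussian random variable with variance $\sigma^2/b_1^2$; in particular it converges in distribution (trivially) to that law as $t\to\infty$.

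The main point, and essentially the only point, is the algebraic cancellation in the first step: once $\widehat{\theta}_t$ is written as $\theta_0$ plus a deterministically rescaled Gaussian martingale, all three statements follow from standard facts. The only place where a genuine limit theorem rather than an exact computation is needed is the almost-sure convergence, and even there the Gaussian structure of $M^H$ makes the argument elementary.
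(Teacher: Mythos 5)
Your proof is correct and follows essentially the same route as the paper: the same key identity $\widehat{\theta}_t = \theta_0 + \sigma M_t^H/(b_1 t^{2-2H})$, unbiasedness and exact Gaussianity from the explicit law of $M_t^H$, and strong consistency from the law of large numbers for martingales applied with $\langle M^H\rangle_t = t^{2-2H}\to\infty$. The only addition is your optional Borel--Cantelli alternative for the almost-sure convergence, which the paper does not pursue.
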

\begin{proof}
Using the definition of the process $\widetilde{Y}_t$, we represent the estimate $\widehat{\theta}_t$  as follows
\begin{equation}\label{eq:MLEGeometricfBM2}
\widehat{\theta}_t = \theta_0 + \frac{\sigma M_t^H}{b_1t^{2-2H}},
\end{equation}
where $\theta_0$ is the true parameter.

The unbiasedness and asymptotic normality follows immediately from \eqref{eq:MLEGeometricfBM2} and the fact
that $M_t^H$ is a Gaussian random variables with zero mean and variance $t^{2-2H}$.
Since $M_t^H$ is a square integrable  martingale with
unbounded quadratic characteristic $t^{2-2H}\to\infty$, as $t\to\infty$ a.s.,  
by Law of Large Numbers for
Martingales \cite{LiptserShiryayevBookMartingales}, Theorem 2.6.10, $M_t^H/\langle M^H\rangle_t\to 0$ a.s., and hence
consistency follows.
\end{proof}

Note that, in particular, for $H=1/2$ we have $b_1=b_2=1$, and  we recover the classical estimate for the
drift coefficient of geometric Brownian Motion
$$
\widehat{\theta}_t =\frac{Y_t}{t} + \frac{\sigma^2}{2} = \frac{\ln(X_t/X_0)}{t} + \frac{\sigma^2}{2}=
\theta_0 + \frac{\sigma W_t}{t}, \quad t>0\, ,
$$
and its corresponding asymptotic behavior.

\section{Maximum Likelihood Estimator for SPDEs}\label{section:MLEforSPDE}
Consider the diagonalizable equation
\begin{equation}\label{eq:mainMLE}
du(t) = (\mathcal{A}_0+\mathcal{A}_1)u(t)dt + \mathcal{M}u(t)dW^H(t),
\end{equation}
with solution $u(t)=\sum_{k\geq 1}u_k(t)h_k$ given by \eqref{eq:solution2}.
As mentioned in Introduction, if $u$ is observable, then all its Fourier coefficients $u_k$ can be computed.
Thus, we assume that the processes $u_1(t), \ldots, u_N(t)$ can be observed for all
$t\in[0,T]$ and the problem is to estimate the parameter $\theta$ based on this observations. Also, we assume that the
Hurst parameter $H\in(0,1)$ is known for now.

By Definition  \ref{def:solution} of the solution of equation \eqref{eq:mainMLE}
the Fourier coefficients $u_k, \ k\in\mathbb{N}$, have the following dynamics
\begin{equation}\label{eq:DynamicFourierCoeff}
du_k(t) = \alpha_k(\theta)u_k(t)dt + \mu_k u_k(t)dW^H(t), \quad t\in[0,T],
\end{equation}
where $\alpha_k(\theta) = \rho_k + \theta\nu_k, \ k\in\mathbb{N}$.

For every non-zero $u_k(0), \ k\in\mathbb{N}$,
denote by $v_k(t) = \ln (u_k(t)/u_k(0))$, and $\widetilde{v}_k(t) = \int_0^tl(t,s)dv_k(s)$, where
$l(\cdot,\cdot)$ is defined in \eqref{eq:notations}.
By results of Section \ref{subsection:estimationGeometricalFBM} it follows that
there exists a Maximum Likelihood Estimate for $\alpha_k(\theta)$ and it has the form
\begin{equation}
\widehat{\alpha_k(\theta)} = \frac{\widetilde{v}_k(t)}{b_1t^{2-2H}} + \frac{Hb_2\mu_k^2}{b_1t^{1-2H}},
\quad k \geq 1.
\end{equation}
Since $\alpha_k(\theta)$ is a strictly monotone  function in $\theta$, by invariant principle
of MLE under invertible transformations, we can find an MLE for the parameter $\theta$
\begin{equation}\label{eq:MLE}
\widehat{\theta}_{k,t} = \frac{\widetilde{v}_k(t) }{\nu_k b_1t^{2-2H}} + \frac{Hb_2\mu_k^2}{\nu_kb_1t^{1-2H}} -
\frac{\rho_k}{\nu_k}, \quad k\geq 1, \ t\in[0,T] .
\end{equation}

Using the definition of the process $\widetilde{v}_k$, the estimate $\widehat{\theta}_{k,T}$
can be represented as follows
\begin{equation}
\widehat{\theta}_{k,T} = \theta_0 + \frac{\mu_k M_T^H}{b_1\nu_kT^{2-2H}} \, ,
\end{equation}
and by similar arguments to the proof of Proposition \ref{th:MLEforGeometricalfBM}, we have the following result.

\begin{theorem}\label{th:MLE}
Assume that equation \eqref{eq:mainMLE} is diagonalizable and parabolic in the triple
$(\mathbf{H}^{\gamma+m}, \mathbf{H}^\gamma, \mathbf{H}^{\gamma-m})$ for some
$\gamma\in\mathbb{R}, \ m>0$ and $u_0\in\mathbf{H}^\gamma$. Then,
\begin{enumerate}
\item For every $k\geq 1$ and $T>0$, $\widehat{\theta}_{k,T}$ is an unbiased estimator of $\theta_0$.
\item For every fixed $k\geq 1$, as $T\to\infty$, the estimator $\widehat{\theta}_{k,T}$
converges to $\theta_0$ with probability one and $T^{1-H}(\widehat{\theta}_{k,T}-\theta_0)$
converges in distribution to a Gaussian random variable with zero mean and variance $\mu_k^2/b_1^2\nu_k^2$.
\item If, in addition,
\begin{equation}\label{eq:CondConvMLE}
\lim\limits_{k\to\infty}\left|\frac{\mu_k}{\nu_k}\right| = 0 ,
\end{equation}
then for every fixed $T>0$, $\lim\limits_{k\to\infty}\widehat{\theta}_{k,T}=\theta_0$ with
probability one, and $|\nu_k/\mu_k|(\widehat{\theta}_{k,T}-\theta_0)$ converges in
distribution to a Gaussian random variable with zero mean and variance $T^{2H-2}/b_1^2$.
\end{enumerate}
\end{theorem}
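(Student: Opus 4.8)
The plan is to read the entire statement off the closed-form representation
\begin{equation*}
\widehat{\theta}_{k,T} = \theta_0 + \frac{\mu_k M_T^H}{b_1\nu_k T^{2-2H}},
\end{equation*}
displayed just above, which reduces everything to the elementary distributional and pathwise properties of the fundamental martingale $M^H$ collected in Subsection~\ref{subsection:estimationGeometricalFBM}. First I would re-derive this identity exactly as in that subsection: from \eqref{eq:solution2} one has $v_k(t)=\alpha_k(\theta_0)t-\tfrac12\mu_k^2t^{2H}+\mu_k W^H_t$, so applying the transform $\widetilde v_k(t)=\int_0^t l_H(t,s)\,dv_k(s)$ and using the constants $b_1,b_2$ of \eqref{eq:notations} gives $\widetilde v_k(t)=\mu_k M_t^H+\alpha_k(\theta_0)b_1t^{2-2H}-\mu_k^2Hb_2t$; substituting this into \eqref{eq:MLE}, the two terms containing $Hb_2\mu_k^2$ cancel and so do the two terms containing $\rho_k/\nu_k$, leaving the displayed formula. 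The parabolicity hypothesis and $u_0\in\mathbf{H}^\gamma$ are used only through Theorem~\ref{th:existance}, which ensures that $u_k$, and hence $\widehat{\theta}_{k,T}$, is well defined (and implicitly $\nu_k\neq0$, $u_k(0)\neq0$).

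Part (1) is then immediate: $M_T^H$ has mean zero, so taking expectations gives $\mathbb{E}[\widehat{\theta}_{k,T}]=\theta_0$ for all $k\ge1$ and $T>0$. For part (2), fix $k$ and write $\widehat{\theta}_{k,T}-\theta_0=\tfrac{\mu_k}{b_1\nu_k}\cdot M_T^H/\langle M^H\rangle_T$ since $\langle M^H\rangle_T=T^{2-2H}$; as $M^H$ is a square-integrable martingale with $\langle M^H\rangle_T\uparrow\infty$ a.s., the strong law of large numbers for martingales (\cite{LiptserShiryayevBookMartingales}, Theorem~2.6.10) gives $M_T^H/\langle M^H\rangle_T\to0$ a.s.\ as $T\to\infty$, hence $\widehat{\theta}_{k,T}\to\theta_0$ a.s. For the normality claim, $T^{1-H}(\widehat{\theta}_{k,T}-\theta_0)=\tfrac{\mu_k}{b_1\nu_k}\,M_T^H/T^{1-H}$, and since $M_T^H\sim\mathcal{N}(0,T^{2-2H})$ we get $M_T^H/T^{1-H}\sim\mathcal{N}(0,1)$; thus $T^{1-H}(\widehat{\theta}_{k,T}-\theta_0)$ is in fact exactly $\mathcal{N}(0,\mu_k^2/(b_1^2\nu_k^2))$ for every $T>0$, which a fortiori yields the asserted convergence in distribution.

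Part (3) is the mirror image, now letting $k\to\infty$ with $T$ fixed. For almost every $\omega$ the number $M_T^H(\omega)$ is finite, so under \eqref{eq:CondConvMLE} the representation gives $\widehat{\theta}_{k,T}-\theta_0=\tfrac{\mu_k}{b_1\nu_k}\,M_T^H/T^{2-2H}\to0$ a.s.\ as $k\to\infty$. Moreover $|\nu_k/\mu_k|(\widehat{\theta}_{k,T}-\theta_0)=\pm\,M_T^H/(b_1T^{2-2H})$, and because $M_T^H\sim\mathcal{N}(0,T^{2-2H})$ and a centered Gaussian is invariant under a sign flip, this random variable is distributed as $\mathcal{N}(0,T^{2H-2}/b_1^2)$ for every $k$, in particular converging in distribution to that law.

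I do not expect a substantive obstacle here: the whole argument runs parallel to Proposition~\ref{th:MLEforGeometricalfBM}. The only places demanding care are the algebraic cancellations in the derivation of the closed-form representation, and the remark that the two ``asymptotic normality'' assertions are really exact distributional identities (for every finite $T$, respectively every $k$), so that the sole genuinely asymptotic ingredient in the proof is the martingale strong law invoked in part~(2).
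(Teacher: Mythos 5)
Your proposal is correct and follows essentially the same route as the paper: the paper itself reduces the theorem to the representation $\widehat{\theta}_{k,T} = \theta_0 + \mu_k M_T^H/(b_1\nu_k T^{2-2H})$ and then invokes the argument of Proposition~\ref{th:MLEforGeometricalfBM} (Gaussianity of $M_T^H$ with variance $T^{2-2H}$ for unbiasedness and the exact normal law, and the martingale strong law for consistency in $T$), which is precisely what you carry out, adding the straightforward $k\to\infty$ analogue for part (3). Your observation that the two normality claims are exact distributional identities rather than genuine limits is accurate and consistent with the paper's computation.
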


\begin{remark}\label{remark:OrderAndEstimation}
The parabolicity conditions \eqref{eq:parabolic1}-\eqref{eq:parabolic2} and
MLE consistency condition \eqref{eq:CondConvMLE} in general are not connected.
In terms of operator's order, parabolicity states that the order of operator $\mathcal{M}$
from the diffusion term is smaller than half of the order of the operators
$\mathcal{A}_0$ and $\mathcal{A}_1$ from deterministic part.
Condition \eqref{eq:CondConvMLE}, that guarantees the consistency of MLE as number of Fourier
coefficients increases, assumes that the order of operator $\mathcal{M}$ from the diffusion
part does not exceed the order of the operator $\mathcal{A}_1$ from deterministic part that
contains the parameter of interest $\theta$.
\end{remark}

By Theorem \ref{th:MLE} it follows that the consistency and asymptotic normality
of the estimates $\widehat{\theta}_{k,T}$ can be achieved  in two ways: by increasing time $T$ or by
increasing the number of Fourier coefficients $k$. In both cases the quality of the estimate
is improved by decreasing its variance.

It is interesting to note that
$\mathrm{Var}\left( \widehat{\theta}_{k,T} - \theta_0 \right) = \mu_k^2T^{2H-2}/b_1^2\nu_k^2$
also depends on Hurst parameter $H$.  For $H>1/2$ the constant
$1/b_1$ is close to one, and increases as function of $H$ for $H\in (0,1/2)$. 
The function $t^{2H-2}$ increases in $H$ for any $t>1$. The constants $\mu_k$ and $\nu_k, \ k\geq 1$, do not
depend on $H$.
Overall, $T^{2H-2}/b_1^2$ increases in $H$ for any $t>1$
and thus quality of the estimates is higher for smaller $H$. \\

%

%
%

As mentioned before, due to the fact that the
probability measures generated by the solution $u$ of the original SPDE are singular,
it is possible to estimate $\theta$ exactly on any finite interval of time $[0,T]$. A natural question
is wether we can improve the quality of the estimates by considering several Fourier coefficients
$u_k(t)$. The answer is that by statistical methods used above this is not possible. First,
note that the measures associated to any two or more processes $u_k$ are singular, and
thus MLE does not exist for such vector-valued functions. In other words, by considering
two or more Fourier coefficients $u_k$, we get a singular model, a fact that
will be explored in the next section. Also, since each process $u_k$ is driven by the same
noise, each individual Fourier coefficient $u_k$ contains the same amount of information:
the sigma-algebra generated by $u_k(t), \ t\in[0,T]$ coincides with the sigma-algebra
generated by $W^{H}(t), \ t\in[0,T]$. However, the speed of convergence of the sequence
$\widehat{\theta}_{k,T}$ can be improved by using accelerating convergence technics from numerical analysis.
Two methods have been discussed into details in \cite{IgorSergey2007}: the  weighted  average method and
Aitken's $\Delta^2$ method. For sake of completeness, we will state here the corresponding results
applied to the sequence $\{\widehat{\theta}_{k,T}\}_{k\geq 1}$.

\noindent {\bf Weighted averaging.} Suppose that $\beta_k, \ k\geq 1$, is a sequence of
non-negative numbers such that $\sum_{k\geq 1}\beta_k=+\infty$, and consider
the weighted averaging estimator
\begin{equation}\label{eq:WeightedMLE}
\widehat{\theta}_{(N,T)} =\sum\limits_{k=1}^N \beta_k\widehat{\theta}_{k,T}\Big/\sum\limits_{k=1}^N\beta_k
\quad N\geq 1, \ T>0\, .
\end{equation}
Then (a) $\widehat{\theta}_{(N,T)}$ is an unbiased estimator of $\theta_0$ for every $N\geq 1$ and $T>0$;
(b) $\lim\limits_{T\to\infty}\widehat{\theta}_{(N,T)}=\theta_0$ a.e. for every $N\geq 1$ (consistency in $T$);
(c) if in addition the consistency condition
 \eqref{eq:CondConvMLE} is fulfilled, then   $\lim\limits_{N\to\infty}\widehat{\theta}_{(N,T)}=\theta_0$ with probability one for every $T>0$ (consistency in $N$).

\bigskip

\noindent {\bf Aitken's $\Delta^2$ method.} Define the following sequence of
estimates
\begin{equation}\label{eq:Aitken}
\widetilde{\theta}_k = \widehat{\theta}_{k,T} - \frac{(\widehat{\theta}_{k+1, T} -\widehat{\theta}_{k,T})^2}%
{\widehat{\theta}_{k+2,T}+2\widehat{\theta}_{k+1,T}-\widehat{\theta}_{k,T}} .
\end{equation}
One can show that the new sequence $\widetilde{\theta}_k$ converges to the true parameter
$\theta_0$ with probability one. Moreover, if $\mu_k/\nu_k\sim \alpha k^{-\delta}$ for
some $\alpha,\delta>0$, then
$$
\frac{\mathbb{E}(\widetilde{\theta}_k-\theta_0)^2}{\widehat{(\theta}_{k,T}-\theta_0)^2} \sim \frac{1}{(1+\delta_1)^2},
$$
and if $\mu_k/\nu_k = (-1)^k/k$, then
$$
\frac{\mathbb{E}(\widetilde{\theta}_k-\theta_0)^2}{\widehat{(\theta}_{k,T}-\theta_0)^2} \sim \frac{c}{k^2},\quad c>0.
$$
In both cases, the new sequence $\widetilde{\theta}_k$ converges faster than $\widehat{\theta}_{k,T}$ to
$\theta_0$.

The proofs of the above results follows from Theorem \ref{th:MLE} and some direct computations, which will be omitted here.


\section{Exact Estimates}\label{section:ExactEstimates}
In regular models the unknown parameter can be found only approximatively, and
the consistency is gained  either in large sample or small noise regime. For singular models
the parameter  can be found exactly. For example, if all Fourier coefficients of the solution
$u$ of equation \eqref{eq:main} are known, according to the results from previous sections, one can
find the value of  $\theta_0$ exactly, on any interval of time $[0,T]$.
The possibility to evaluate $\theta_0$ exactly is based on
singularity of the measures generated by $u^{\theta}$ for different values of $\theta$. However,
while theoretically it is possible to estimate the true parameter exactly,
in practice we (or computer) can perform only a finite number of operations. Recall that the measures associated
to an individual Fourier coefficient $u_k^\theta$ are regular, while a vector consisting
of any two or more Fourier coefficients will produce measures that are singular. In this section
we will explore this singularity, and show that in fact the true parameter can be estimated exactly from
a finite number of  Fourier coefficients. Moreover, the described method allow to find both parameters $\theta$ and $H$, either individually or simultaneously.

Following \cite{IgorSergey2007} we say that an estimator is {\it closed-form exact} if
it produces the exact value of the parameter of interest after finite
number of additions, substraction, multiplications, and divisions on the elementary functions of the observations.

Closed-form  exact estimates exists for the model \eqref{eq:main} if we assume that
observations are $u_k(t), k\geq 1, \ t\in[0,T]$.
For every non-zero Fourier coefficient $u_k$ of the form \eqref{eq:DynamicFourierCoeff},
set $v_k(t)=\ln u_k(t)/u_k(0), \ t\in[0,T]$. Then
\begin{equation}\label{eq:v_k}
v_k(t) = (\rho_k + \theta\nu_k) t - \frac12 \mu_k^2 t^{2H} + \mu_kW^H(t) \ .
\end{equation}

\bigskip
\noindent {\bf Case 1.  $\theta$ unknown, $H$ known.}
Assume that $\nu_k\mu_m\neq\nu_m\mu_k$ for some $k,m\in\mathbb{N}$. Then, taking \eqref{eq:v_k}
for these $k$ and $m$, by direct arithmetic evaluations, one gets the
exact estimate of the parameter $\theta$
\begin{equation}\label{eq:exactEstimate}
\theta = \frac{\mu_m v_k- \mu_kv_m + (\rho_m\mu_k-\rho_k\mu_m)t + \frac{1}{2} (\mu_k^2\mu_m-\mu_m^2\mu_k) t^{2H}}
{t(\nu_k\mu_m-\nu_m\mu_k)} \ ,
\end{equation}
for any $t>0$ and $k,m\in\mathbb{N}$ for which $\nu_k\mu_m\neq\nu_m\mu_k$.

Note that if $\mu_k=\mu_m$ then the above exact estimate does not depend on $H$, and
$\theta$ can be evaluated even if $H$ is unknown.  This is the case, for example, if
$\mathcal{M}$ is the identity operator (see Example 1 below).

\bigskip

\noindent {\bf Case 2. $H$ unknown, $\theta$ known. }
Assume now that the parameter of interest is the Hurst parameter $H$ and assume that $\theta$ is known.
By the same arguments as above, one can solve for $H$ the system of two equations generated by
\eqref{eq:v_k} for some $k$ and $m$, and get the following exact estimate for $H$
\begin{equation}
H=\frac{1}{2 \ln t} \ln\left[
\frac{(\rho_k+\theta\nu_k)t\mu_m - (\rho_m+\theta\nu_m)t\mu_k - v_k\mu_m + v_m\mu_k}
{2\mu_k\mu_m(\mu_k-\mu_m)} \right] \ ,
\end{equation}
for any $t>0$, $k\neq m$, and under assumption that the expression under logarithm is positive and finite.

\bigskip

\noindent {\bf Case 3. Both $\theta$ and $H$ unknown.}
Denote by $\alpha_{k,m}:=(\nu_k\mu_m - \nu_m\mu_k)t,  \
\beta_{k,m}:=1/2(\mu_m^2\mu_k - \mu_k^2\mu_m)$ and
$\delta_{k,m} := v_k\mu_m - v_m\mu_k -\rho_k\mu_m t -\rho_m\mu_k t$.
Assume that for some $m,k,i,j\in\mathbb{N}$, $\alpha_{k,m}\beta_{i,j}\neq \alpha_{k,m}\beta_{i,j}$. Then
the following exact estimate for $\theta$ holds true
\begin{equation}\label{eq:ExactEstimateThetaH1}
\theta = \frac{\delta_{k,m}\beta_{i,j} - \delta_{i,j}\beta_{k,m}}
{\alpha_{k,m}\beta_{i,j}-\alpha_{i,j}\beta_{k,m}} \ .
\end{equation}
If in addition $\delta_{k,m}\alpha_{i,j}\neq\delta_{i,j}\alpha_{k,m}$, then
there exists an exact estimate for Hurst parameter $H$ given by
\begin{equation}\label{eq:ExactEstimateThetaH2}
H = \frac12 \log_t \frac{\delta_{k,m}\alpha_{i,j} - \delta_{i,j}\alpha_{k,m}}
{\beta_{k,m}\alpha_{i,j} - \beta_{i,j} \alpha_{k,m}} \ .
\end{equation}
Note that for this case, generally speaking, it is sufficient to know only three Fourier
coefficients, i.e. some of the indices $k,m,i,j$ can coincide.

\begin{remark} $\quad$
\begin{itemize}
\item[(a)]Applying the above idea, closed-form exact estimates can be obtained for equations driven by
several fractional Brownian motions, even with different Hurst parameters. If we assume that
the noise is driven by $n$ fBMs, and that one of the parameters $\theta$ or $H$ is known,
then by considering $n+1$ Fourier coefficients we can eliminate all
noises and get a closed-form estimate as a solution, under some non-degeneracy assumptions.
Respectively, if both parameters are unknown, then one can estimate them by considering
$n+2$ Fourier coefficients.
\item[(b)]Note that the construction of the exact estimates assumed only the existence of the solution and did not impose any additional assumptions on the
order of the operators $\mathcal{A}_0, \ \mathcal{A}_1, \ \mathcal{M}$, in contrast to
MLE estimates where the consistency holds only under additional assumptions on order of corresponding operators.
\item[(c)] The MLE $\widehat{\theta}_{k,T}$ depend on the whole trajectory of the Fourier coefficient $u_k(t), \ t\in[0,T]$.
All exact estimates depend only on
initial and terminal value of $u_k$'s.
\end{itemize}
\end{remark}

\section{Examples}
We conclude the paper with two practical examples where we explore some of the estimates proposed above.

{\bf Example 1.} {\it Stochastic heat equation.} Let $\theta$ be a positive number, and consider the following equation
\begin{equation}\label{eq:heatEquation}
du(t,x) = \theta u_{xx}(t,x)dt + u(t,x)dW^H(t), \quad t>0, \ x\in(0,1),
\end{equation}
with zero boundary conditions and some nonzero initial value $u(0)\in L_2(0,1)$. In this case
the operator $\mathcal{A}_1$ is the Laplace operator on $(0,1)$ with zero boundary conditions that
has the eigenfunctions $h_k(x)=\sqrt{2/\pi}\sin(kx), \ k>0$, and eigenvalues
$\nu_k=-k^2,\ \rho_k=0,\ \mu_k=1, \ k>0$.  Assume that $u(t,x)$ is known for $x\in[0,1]$ and $t\in[0,T]$, hence
$u_k(t):=\int_0^1h_k(x)u(t,x)dx, \ k\in\mathbb{N}$, is observable. Denote by
$v_k(t):=\log( u_k(t)/u_k(0))$  for every $k\in\mathbb{N}$, and $u_k(0)\neq 0$.
By Theorem \ref{th:MLE}, the MLE for $\theta$ has the form
$$
\widehat{\theta}_k = - \frac{\int_0^Tl(T,s)dv_k(s)}{k^2 b_1 T^{2-2H}}
- \frac{Hb_2}{k^2 b_1 T^{1-2H}} , \ k\in\mathbb{N}.
$$

The exact estimates \eqref{eq:exactEstimate} for $\theta$ are given by
$$
\theta = \frac{1}{T(m^2-k^2)} \ln\frac{u_k(T)u_m(0)}{u_m(T)u_k(0)} \, ,
$$
for any $k \neq m$ and $T>0$. Note that the exact estimates do not depend on $H$.
However there are no exact-type estimates for $H$.

\noindent  {\bf Example 2.} Assume that $G$ is a bounded domain in $\mathbb{R}^d$, and let
$\Delta$ be the Laplace operator on $G$ with zero boundary conditions. Then $\Delta$ has
only point spectrum with countable many eigenvalues, call them $\sigma_k, k\in\mathbb{N}$.
Moreover, the set of corresponding eigenvalues forms an orthonormal basis in
$L_2(G)$; the eigenvalues can be arranged so that $0<-\sigma_1\leq -\sigma_2\leq \ldots$;
the eigenvalues have the asymptotic $\sigma_k\sim k^{2/d}$.   In the space $\mathbf{H}^0(G)$
let us consider the following stochastic evolution equation
$$
du(t)= [\Delta u(t) + \theta u(t)]dt + (1-\Delta)^r u(t) dW^H(t) \ ,
$$
with some nonzero initial values in $\mathbf{H}^0(G)$, and some $r\in\mathbb{R}$.
According to our notations we have the operators
$\mathcal{A}_0=\Delta, \ \mathcal{A}_1=I, \ \mathcal{M}=(1-\Delta)^r$, with corresponding eigenvalues
$\nu_k = 1, \ \rho_k=\sigma_k, \ \mu_k=(1+\sigma_k)^r $.  The equation is diagonalizable, and by
Theorem \ref{th:existance}, it has a unique solution in the triple
$(\mathbf{H}^{1}, \mathbf{H}^0,\mathbf{H}^{-1})$ for any $r\leq 1/2$.

The maximum likelihood estimate in this case has the form
$$
\widehat{\theta}_{N,t} = \frac{\widetilde{v}_k(t)}{b_1\sigma_k t^{2-2H}} +
\frac{Hb_2(1-\sigma_k)^{2r}}{\sigma_kb_1t^{1-2H}} - \frac{1}{\sigma_k}, \ t>0, \ k\in\mathbb{N},
$$
which is an unbiased estimate of the parameter $\theta$.

\noindent {\bf (2a)} {\it Large time asymptotics.} $\lim\limits_{t\to\infty}\widehat{\theta}_{k,t} = \theta_0$ a.s. for all $k\geq 1$;
$\lim\limits_{t\to\infty} t^{1-H}(\widehat{\theta}_{k,t} -\theta_0  ) \overset{d}{=} \xi$, where $\xi\sim \mathcal{N}(0, (1-\sigma_k)^2/b_1^2)$.

\noindent {\bf (2b)} {\it Consistency in number of spatial Fourier coefficients.}  Assume that $r<0$. Then $\lim\limits_{k\to\infty}\widehat{\theta}_{k,t} = \theta_0$ a.s., for every $t>0$, and
 the sequence $(1-\sigma_k)^{-1}(\widehat{\theta}_{k,t} - \theta_0)$ converges in distribution to a Gaussian random variable with mean zero and variance $t^{2H-2}/b_1^2$.
 If $r\in[0,1/2]$ the solution still exists in the space $\mathbf{H}^0(G)$, while the estimate $\widehat{\theta}_{k,t}$ is not consistent in $k$.

\noindent {\bf (2b)} {\it Exact estimates.} Let $v_k(t)=\ln(u_k(t)/u_k(0))$. Assume that Hurst parameter $H$ is known. Then we have the following exact estimate for $\theta$
\begin{align*}
\theta =&
\frac{ (1-\sigma_m)^r v_k - (1-\sigma_k)^r v_m}{ t((1-\sigma_m)^r - (1-\sigma_k)^r)} +
     \frac{\sigma_m(1-\sigma_k)^r - \sigma_k (1-\sigma_m)^r}{ (1-\sigma_m)^r - (1-\sigma_k)^r }  \\
     & + \frac{t^{2H-1}}{2}\frac{(1-\sigma_k)^{2r}(1-\sigma_m)^r - (1-\sigma_m)^{2r}(1-\sigma_k )^r}{ (1-\sigma_m)^r - (1-\sigma_k)^r },
\end{align*}
for any $k\neq m$ and $t>0$.

If $\theta$ is known, then the Hurst parameter $H$ can be found by
$$
H = \frac{1}{2}\log_t   \frac{ [(\sigma_k + \theta)(1-\sigma_m)^r - (\sigma_m+\theta) (1-\sigma_k)^r]t - v_k(1-\sigma_k)^r + v_m(1-\sigma_m)^r }
{2 (1-\sigma_k)^{2r}( 1- \sigma_m)^r - (1-\sigma_m)^{2r}(1-\sigma_k)^r } ,
$$
for any $k\neq m, \ t>0$.

Finally one can write the exact estimates \eqref{eq:ExactEstimateThetaH1} and \eqref{eq:ExactEstimateThetaH2} for the case when both parameters $\theta$ and $H$ are unknown.
Note that the exact estimates exists for all $r$ as long as the solution exists (maybe in a larger space) and the Fourier coefficients $u_k(t)$ are computable.


\bibliographystyle{amsplain}


\def\cprime{$'$} \def\cprime{$'$}
\providecommand{\bysame}{\leavevmode\hbox to3em{\hrulefill}\thinspace}
\providecommand{\MR}{\relax\ifhmode\unskip\space\fi MR }
\providecommand{\MRhref}[2]{%
  \href{http://www.ams.org/mathscinet-getitem?mr=#1}{#2}
}
\providecommand{\href}[2]{#2}

\end{document}